\newtheorem{theorem}{Theorem}
\newtheorem{lemma}[theorem]{Lemma}
\def\E{{\mathbb E}}
\def\P{{\mathbb P}}
\begin{document}
\title{On the concentration of the chromatic number of random graphs}
\author{
Alex Scott
\thanks{Mathematical Institute, University of Oxford, Andrew Wiles Building, Radcliffe Observatory Quarter, Woodstock Road, Oxford OX2 6GG, UK;
email: scott@maths.ox.ac.uk 
}
\date{}
 }

\maketitle

\begin{abstract}
Let $0<p<1$ be fixed.  Shamir and Spencer proved in the 1980s that 
the chromatic number of a random graph $G\in\mathcal G(n,p)$ is concentrated in an interval of length $\omega(n)\sqrt n$.
In this explanatory note, we give a proof of a result due to Noga Alon, showing that
$\chi(G)$ is concentrated in an interval of length $\omega(n)\sqrt n/\log n$.
\end{abstract}

\section{Introduction} 

How concentrated is the chromatic number $\chi(G)$ of a random graph $G\in\mathcal G(n,p)$?
For constant probability $p\in(0,1)$, Shamir and Spencer \cite{SS87} proved in the 1980s that $\chi(G)$ is concentrated in an interval of length $\omega(n)\sqrt n$.
For sparse random graphs, much stronger concentration results are known: Shamir and Spencer \cite{SS87} showed that for $p<n^{-5/6-\epsilon}$, the chromatic number is concentrated on 5 consecutive integers.  \L uczak \cite{L91} sharpened this to a 2-point concentration result, while  Alon and Krivelevich \cite{AK97} extended 2-point concentration to the larger range $p<n^{-1/2-\epsilon}$.

The aim of this note is to show that \L uczak's approach also works for random graphs with constant density, giving a slight improvement on the concentration result of Shamir and Spencer, from $\omega(n)\sqrt n$ to $\omega(n)\sqrt n/\log n$.    After posting the original (2008) version of this note on the arxiv, it emerged that this was proved independently (and a good bit earlier) by Noga Alon, who included it as an exercise in \cite{AS}.  However, apparently no proof has been published, and so it seems worthwhile to leave this as an explanatory note.

\goodbreak

We will prove the following theorem.

\begin{theorem}\label{main}
Let $0<p<1$ be fixed, and suppose $\omega(n)\to\infty$ as $n\to\infty$.  Then there is a function $h=h(n)$ such that,
for $G\in\mathcal G(n,p)$, with probability $1-o(1)$,
\begin{equation}\label{bound}|\chi(G)-h(n)|<\omega(n)\sqrt{n}/{\log n}.
\end{equation}
\end{theorem}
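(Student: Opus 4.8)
The plan is to run the vertex-exposure martingale of Shamir and Spencer together with a delete-and-recolour argument in the spirit of \L uczak, the whole $\log n$ saving coming from one structural fact: although one typically must delete $\Theta(\sqrt n)$ vertices to make $G$ properly colourable, the deleted set induces a subgraph that can be recoloured with only $O(\sqrt n/\log n)$ colours, because any induced subgraph of $G$ on $m$ vertices (with $m$ not too large) has chromatic number $O(m/\log n)$. I will take $h(n)$ to be the median of $\chi(G)$, that is the least integer $u$ with $\P(\chi(G)\le u)\ge\tfrac12$, and write $s=\omega(n)\sqrt n/\log n$. After replacing $\omega(n)$ by $\min\{\omega(n),\sqrt{\log\log n}\}$ (a stronger statement) I may assume $\omega(n)=o(\sqrt{\log n})$, which keeps all error terms below the scale of $\log n$.

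The first step is a deletion lemma. For a real $v$ let $Y_v=Y_v(G)$ be the least number of vertices whose removal makes $G$ properly $v$-colourable. Altering the edges at one vertex changes $Y_v$ by at most $1$ (put that vertex into the deletion set), so $Y_v$ is $1$-Lipschitz for vertex exposure. If $\P(\chi\le v)\ge\delta$ then $\P(Y_v=0)\ge\delta$, so the lower tail of Azuma--Hoeffding gives $\E Y_v\le\sqrt{2n\ln(1/\delta)}$, and the upper tail then gives $Y_v\le\sqrt{2n\ln(1/\delta)}+\lambda\sqrt n$ with probability $1-o(1)$ for any $\lambda=\lambda(n)\to\infty$. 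Deleting an optimal set $S$ and recolouring $G[S]$ from a fresh palette yields $\chi(G)\le v+\chi(G[S])$.

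The main ingredient is the structural lemma: there is $C=C(p)$ so that, with probability $1-o(1)$, every set $S$ with $s\le|S|\le n^{0.6}$ satisfies $\chi(G[S])\le C|S|/\log n$ (for $|S|<s$ one uses $\chi(G[S])\le|S|<s$). I would prove this by the standard greedy peeling of independent sets: $G[S]$ is a copy of $\mathcal G(|S|,p)$, which whp contains independent sets of size $(2-o(1))\log_{1/(1-p)}|S|$, and removing them one at a time colours $G[S]$ with $(1+o(1))|S|/(2\log_{1/(1-p)}|S|)=O(|S|/\log n)$ colours, since $\log|S|=(\tfrac12+o(1))\log n$ in this range. \textbf{The hard part is the uniformity:} the bound must hold simultaneously for all $S$, so one needs the failure probability for a fixed $m$-set to be $\exp(-\omega(m\log n))$ in order to absorb the $\binom nm\le e^{O(m\log n)}$ choices. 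As $m\ge s\gg\log n$, the concentration of independent-set counts (through Janson's inequality or a second martingale) comfortably supplies this, but controlling it through every stage of the peeling is where the real work lies.

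Granting the two lemmas, the theorem follows. For the upper tail, apply the deletion lemma with $v=h(n)$ and $\delta=\tfrac12$: whp $Y_v\le(\sqrt{2\ln2}+\lambda)\sqrt n$, so $\chi(G)\le h(n)+C\,Y_v/\log n<h(n)+s$ once $\lambda\to\infty$ with $\lambda=o(\omega(n))$, giving $\P(\chi\ge h(n)+s)=o(1)$. For the lower tail, write $\delta=\P(\chi\le h(n)-s)$ and suppose $\delta>e^{-\omega(n)^2/(8C^2)}$. Then the deletion lemma with $v=h(n)-s$ gives $\ln(1/\delta)<\omega(n)^2/(8C^2)$ and hence $Y_v<\omega(n)\sqrt n/C$ whp, so recolouring would give $\chi(G)\le(h(n)-s)+C\,Y_v/\log n<h(n)$ with probability $1-o(1)$, contradicting $\P(\chi\ge h(n))>\tfrac12$. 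Therefore $\P(\chi\le h(n)-s)\le e^{-\omega(n)^2/(8C^2)}=o(1)$, and combining the two tails yields \eqref{bound}.
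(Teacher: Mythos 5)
Your probabilistic framework is correct and essentially coincides with the paper's: your $Y_v$ is exactly $n-s(G)$ where $s(G)$ is the paper's maximum size of an $h$-colourable vertex set, the Lipschitz property and the inference $\P(Y_v=0)\ge\delta\Rightarrow\E Y_v\le\sqrt{2n\ln(1/\delta)}$ are right, and your median-plus-bootstrap handling of the lower tail is a valid alternative to the paper's cleaner device of defining $h(n)$ as the $1/\omega(n)$-quantile of $\chi$, which makes $\P(\chi<h(n))\to 0$ true by construction so that only the upper tail needs any work.

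The genuine gap is the structural lemma, which you state but do not prove — and it is the crux: it is the only point where the factor $\log n$ is gained, and it is the paper's sole lemma. Your sketch has two concrete problems. First, the peeling as described has a conditioning issue: after you remove an independent set from $G[S]$, the remainder is no longer a fresh copy of $\mathcal G(\cdot,p)$, so ``whp $\alpha(\mathcal G(m,p))\ge(2-o(1))\log_{1/(1-p)}m$'' cannot simply be reapplied; one must prove a statement uniform over all subsets at all relevant scales, and those scales go well below $s$: for $|S|$ close to $s$, the peeling must keep finding large independent sets in subsets of size down to roughly $|S|/\log n$ (otherwise the un-peeled residue, coloured with singletons, costs up to $s$ colours, swamping the target $C|S|/\log n$), whereas your union-bound accounting is only discussed for $m\ge s$. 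Second, you aim at the sharp constant $2$, which is essentially the hard part of Bollob\'as's chromatic-number theorem and needs Janson-type tail bounds; the theorem needs only \emph{some} constant, and that is available by a much softer argument, which is what the paper does. Its Lemma 2 shows that whp every vertex set of size at least $n^{1/4}$ has edge density at least $p/2$ (in $\overline G$), by Chernoff plus a union bound that is affordable because the failure probability $e^{-\Omega(u^2)}$ for a fixed $u$-set crushes $\binom{n}{u}\le e^{u\ln n}$ once $u\ge n^{1/4}\gg\log n$; on that single event, a greedy maximum-degree descent finds, deterministically and with no conditioning issues, an independent set of size $c(p)\log n$ inside \emph{every} set of size $>n^{1/3}$. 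Peeling such sets from $S$ and giving singleton colours to the final $n^{1/3}$ vertices yields $\chi(G[S])\le|S|/(c\log n)+n^{1/3}=O(|S|/\log n)$ for every $S$ in your range — exactly your structural lemma. With that substitution, the rest of your proof goes through.
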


Here and throughout, $\omega(n)$ refers to any function that tends to $\infty$ as $n\to\infty$.

Let us note that bounding the concentration of the chromatic number does not determine its likely value.  The asymptotic behaviour of $\E \chi(G)$, where $G\in\mathcal G(n,p)$ and $p$ is fixed, was resolved by Bollob\'as \cite{B88}, while \L uczak \cite{L91a} extended this result to a much wider range of values $p=p(n)$ (see also McDiarmid \cite{M90}).  In the sparse case ($p=c/n$), Achlioptas and Naor \cite{AN05} have given an explicit pair of consecutive integers $k, k+1$ such that $\chi(G)\in\{k,k+1\}$ almost surely, and
Coja-Oghlan, Panagiotou, Steger \cite{CPS08} recently proved an explicit three-point concentration result for $p<n^{-3/4-\epsilon}$.

Finally, let us note that, for $p$ fixed, nothing is known from below concerning concentration: it has not even been shown that the chromatic number cannot be concentrated in an interval of constant length!  See Bollob\'as (\cite{B04} and \cite{B01}) for further discussion.

\section{Proof}

The proof will proceed as follows.  As in \L uczak \cite{L91a}, we first define $h(n)$ so that $\P(\chi(G)\le h(n))$ tends to 0 slowly.  Then $\chi(G)>h(n)$ with probability $1-o(1)$, so we need only bound $\chi(G)$ from above.  A martingale argument shows that we can colour all but (a little more than) $\sqrt n$ vertices with $h(n)$ colours, so we try to colour the remaining vertices without using too many new colours.  \L uczak's argument used the local sparsity of $G$; here, in the dense case, we can use a fairly crude greedy algorithm, which shows that any reasonably large set $S$ of vertices can be coloured with $O(|S|/\log n)$ colours.  This will be enough to prove \eqref{bound}.

We first need a simple lemma on independent sets in random graphs.  It is slightly cleaner to phrase it in terms of complete subgraphs.

\begin{lemma}\label{ind}
Let $0<p<1$ be fixed, and suppose $\omega(n)\to\infty$ as $n\to\infty$. 
There is a constant $c=c(p)$ such that,
for $G\in\mathcal G(n,p)$, with probability $1-o(1)$, every subset $W\subset V(G)$ with $|W|>n^{1/3}$ contains a complete subgraph with at least $c\log n$ vertices.
\end{lemma}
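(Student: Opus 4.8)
The plan is to reduce to subsets of one fixed size and then, for each, to force a large clique by a first- and second-moment argument sharpened with Janson's inequality. First I would note a monotonicity: since any $W$ with $|W| > n^{1/3}$ contains a subset of size exactly $m := \lfloor n^{1/3}\rfloor + 1$, and any clique of that subset is a clique of $W$, it suffices to show that with probability $1-o(1)$ every set of size exactly $m$ spans a clique on at least $c\log n$ vertices. Fix such a $W$; then $G[W]\sim\mathcal G(m,p)$. Put $k := \lceil c\log n\rceil$, where $c = c(p) > 0$ is chosen small enough that $k \le \log_{1/p} m$ (i.e. $k$ lies below half the clique threshold $2\log_{1/p} m$ of $\mathcal G(m,p)$); explicitly any $c < 1/(3\log(1/p))$ works. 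Let $X$ count the $k$-cliques of $G[W]$, so $\mu := \E X = \binom{m}{k} p^{\binom{k}{2}}$. A routine estimate gives $\log\mu = (1+o(1))\,k\bigl(\log m - \tfrac12 k\log(1/p)\bigr) = \Theta\bigl((\log n)^2\bigr)\to\infty$, so $\mu$ is superpolynomially large.

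The obstacle is that Chebyshev alone is too weak: it yields only $\P(X=0)\le(1+o(1))\Delta/\mu^2$, which turns out to be merely polynomially small, whereas I must beat a union bound over the $\binom{n}{m} = \exp\bigl(\Theta(n^{1/3}\log n)\bigr)$ sets $W$. I would therefore invoke Janson's inequality, $\P(X=0)\le\exp\bigl(-\mu^2/(2\Delta)\bigr)$ (valid once $\Delta\ge\mu$), where $\Delta = \sum\P(A,B\text{ both cliques})$ runs over ordered pairs of $k$-subsets with $|A\cap B|\ge 2$ (pairs meeting in at most one vertex share no edge, hence are independent). Writing $\Delta/\mu^2 = \sum_{i=2}^{k-1} f_i$ with $f_i = \binom{k}{i}\binom{m-k}{k-i}\binom{m}{k}^{-1} p^{-\binom{i}{2}}$, the whole argument comes down to showing that this sum is dominated by its first term $f_2 = \Theta(k^4/m^2) = \Theta\bigl((\log n)^4/n^{2/3}\bigr)$.

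This domination is the main point, and I would establish it through the ratio $f_{i+1}/f_i = \frac{(k-i)^2}{(i+1)(m-2k+i+1)}\,p^{-i}$. For our choice of $c$ one has $p^{-k}\le m^{1-\delta}$ for some fixed $\delta = \delta(p) > 0$, so for all $2\le i\le k-1$ this ratio is at most $2k^2 m^{-\delta} = o(1)$; hence the $f_i$ decay geometrically and $\sum_i f_i = (1+o(1))f_2$. Consequently $\mu^2/\Delta = (1+o(1))/f_2 = \Omega\bigl(n^{2/3}/(\log n)^4\bigr)$ (and $\Delta\ge\mu$ holds, since $\Delta/\mu = \mu\sum_i f_i$ and $\mu$ is superpolynomial), so Janson yields $\P(X=0)\le\exp\bigl(-\Omega(n^{2/3}/(\log n)^4)\bigr)$ for each fixed $W$. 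A union bound over all $m$-sets then gives failure probability at most $\binom{n}{m}\exp\bigl(-\Omega(n^{2/3}/(\log n)^4)\bigr)\le\exp\bigl(O(n^{1/3}\log n) - \Omega(n^{2/3}/(\log n)^4)\bigr) = o(1)$, since the polynomial $n^{2/3}/(\log n)^4$ dominates $n^{1/3}\log n$. This proves the lemma. The one genuinely delicate step is the estimate on $\Delta$: everything hinges on ruling out a large contribution from the middle and upper terms $f_i$, which is exactly what the ratio bound above accomplishes.
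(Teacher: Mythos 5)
Your proof is correct, but it takes a genuinely different route from the paper. The paper avoids Janson's inequality entirely: it uses a Chernoff bound plus a union bound at the single scale $u\approx n^{1/4}$ to show that, with probability $1-o(1)$, every $u$-set (hence, by averaging, every larger set) has edge density at least $p/2$; it then extracts the clique from any given $W$ with $|W|>n^{1/3}$ by a \emph{deterministic} greedy argument --- repeatedly pick a vertex of maximum degree in the current set and pass to its neighbourhood inside that set --- so the set shrinks by a factor of roughly $p/3$ per step and survives for $\Omega(\log n)$ steps before dropping below the scale $n^{1/4}$. The probabilistic input there (a binomial lower tail at one scale, where $e^{-\Omega(pu^2)}$ easily beats $\binom{n}{u}$) is thus decoupled from the clique extraction. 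Your route works directly at the scale $m\approx n^{1/3}$ and therefore needs, per set, a failure probability of $\exp(-\omega(n^{1/3}\log n))$; as you correctly observe, the second moment method gives only a polynomially small bound, so the generalized Janson inequality is essential, and your verification that $\Delta/\mu^2=(1+o(1))f_2$ via the ratio $f_{i+1}/f_i$ is sound (it is the standard computation for clique numbers, as in Alon--Spencer). What each approach buys: yours gives a better constant --- any $c<1/(3\log(1/p))$, versus the paper's $c(p)=1/(12\log(2/p))$ --- and with more care near the threshold would push $c$ toward $2/(3\log(1/p))$, essentially the true clique number at that scale; the paper's is more elementary (only Chernoff), shorter, and its two-step structure (a density statement plus a deterministic lemma) is arguably more robust. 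Both deliver far stronger bounds than Theorem \ref{main} actually requires.
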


\begin{proof}[Proof of Lemma \ref{ind}]
Let $G\in\mathcal G(n,p)$, and suppose that $U\subset V(G)$ has $u\ge n^{1/4}$ vertices.  We use Chernoff's inequality in the form that
$X\sim B(r,p)$ implies $\P(X<rp-t)\le\exp(-t^2/2rp)$.  Thus (with $t=pu^2/5$),
$$\P(e(U)<pu^2/4)\le \exp\left(-(pu^2/5)^2/2p\binom u2\right)<\exp(-pu^2/30).$$
So the probability that there is a subset of size $u$ with fewer than $pu^2/4$ edges is at most
$$\binom nue^{-pu^2/30}\le
\left(\frac{en}{u}\right)^u e^{-pu^2/30}=\left(\frac{en}{u}e^{-pu/30}\right)^u=o(1).$$
It follows that, with probability $1-o(1)$, every subset of $u$ vertices has edge density at least $p/2$; thus every subset of $u'\ge u$ vertices has edge density at least $p/2$ and hence induces a subgraph with maximal degree at least $p(u'-1)/2$.

Suppose that this property holds.
Given $W\subset V(G)$, we choose a complete subgraph greedily: set $W_0=W$ and, for $i\ge 1$, pick $w_i\in W_{i-1}$ with $|\Gamma(w_i)\cap W_{i-1}|$ maximal and set $W_i=\Gamma(w_i)\cap W_{i-1}$, halting with the complete subgraph $\{w_1,\ldots,w_i\}$ as soon as $W_i$ is empty.  The observations above imply that (for $n$ sufficiently large) $|W_i|\ge\frac p2(|W_{i-1}|-1)>p|W_{i-1}|/3$ whenever $|W_{i-1}|\ge u$, and so $|W_i|$ is nonempty provided $(p/2)^i<n^{-1/12}$.  The result follows immediately with $c(p)=-1/12\log(p/2)$.
\end{proof}

\begin{proof}[Proof of Theorem \ref{main}]
We may assume that $\omega(n)=o(\log\log n)$.  Let $G\in \mathcal G(n,p)$ and let
\begin{equation}\label{h}
h(n)=\min\big\{r:\P(\chi(G)\le r)>\frac{1}{\omega(n)}\big\}.
\end{equation}
We shall show that \eqref{bound} holds with this $h$.

Clearly $\P(\chi(G)<h(n))\to0$ as $n\to\infty$; thus it suffices to show that, with probability $1-o(1)$, we have $\chi(G)<h(n)+\omega(n)\sqrt{n}/\log n$.
Let
$$s(G)=\max\{|W|: W\subset V(G), \chi(G[W])\le h(n)\}$$
be the maximum number of vertices we can colour with $h$ colours.  Consider the vertex exposure martingale: modifying the edges from a single vertex can change $s(G)$ by at most 1, so the Azuma-Hoeffding inequality implies that, for $t>0$,
$$\P(|s(G)-\E s(G)|>t)\le 2e^{-t^2/n}.$$
In particular, for $n$ sufficiently large,
\begin{equation}\label{hh}
\P(|s(G)-\E s(G)|>\sqrt{\omega(n)n})<\frac{1}{\omega(n)}.
\end{equation}
It follows from \eqref{h} that
$\P(s(G)=n)>1/\omega(n)$.  Thus \eqref{hh} implies that $\E(s(G))\ge n-\sqrt{\omega(n)n}$ and therefore
$\P(s(G)<n-2\sqrt{\omega(n)n})<1/\omega(n)$.  Let $W\subset V(G)$ be a subset of maximal size such that $\chi(G[W])\le h(n)$, and let $U=V(G)\setminus W$.  We claim that, with probability $1-o(1)$, $\chi(G[U])\le\omega(n)\sqrt n/\log n$.  Since $\chi(G)\le\chi(G[W])+\chi(G[U])$,  \eqref{bound}  follows immediately.

The claim follows simply from Lemma \ref{ind}.  Indeed, $|U|\le 2\sqrt{\omega(n)n}$ with probability $1-o(1)$.  Let us greedily remove independent sets of maximal size from $U$, giving a new colour to each, until $n^{1/3}$ vertices remain (and give new colours to each of these).  The lemma (applied to $\overline G$) implies that, with probability $1-o(1)$, we use at most $O(2\sqrt{\omega(n)n}/c(p)\log n +n^{1/3})$ colours, which is bounded by $\omega(n)\sqrt n/\log n$ for sufficiently large $n$. 
\end{proof}

\bigskip

\noindent{\bf Acknowledgements:}  The author would like to thank Noga Alon, Boris Bukh and Benny Sudakov for helpful comments.

\end{document}